\title{H\"olderian weak invariance principle under Maxwell and Woodroofe condition}
\keywords{stationary sequences; Maxwell and Woodroofe condition; 
H\"older spaces; invariance principle.}
\date{\today}
\address{Normandie Universit\'e, Universit\'e de Rouen, 
Laboratoire de Math\'ematiques Rapha\"el Salem,
CNRS, UMR 6085, Avenue de l'universit\'e, BP 12, 
76801 Saint-Etienne du Rouvray Cedex, 
France.}
\email{davide.giraudo1@univ-rouen.fr}
\numberwithin{equation}{section}
\newtheorem{Theorem}{Theorem}[section]
\newtheorem{Definition}[Theorem]{Definition}
\newtheorem{Proposition}[Theorem]{Proposition}
\newtheorem{Lemma}[Theorem]{Lemma}
\newtheorem{Corollary}[Theorem]{Corollary}
\theoremstyle{remark}
\newtheorem{Remark}[Theorem]{Remark}
\tikzstyle{Vertex}=[circle,draw=LimeGreen!80,fill=LimeGreen!8,
\tikzstyle{Node}=[Vertex,draw=RoyalBlue!80,fill=RoyalBlue!8,inner sep=1.5pt]
\tikzstyle{Leaf}=[rectangle,draw=Black!70,fill=Black!16,
\tikzstyle{Edge}=[Maroon!80,cap=round,line width=1pt]
\tikzstyle{Mark1}=[draw=BrickRed!80,fill=BrickRed!8]
\tikzstyle{Mark2}=[draw=BurntOrange!80,fill=BurntOrange!8]
\tikzstyle{EdgeRew}=[->,RedOrange!80,cap=round,thick]
\newcommand{\norm}[1]{\left\lVert #1 \right\rVert}
\newcommand \ens[1]{\left\{ #1\right\}}
\newcommand \R{\mathbb R}
\newcommand \Z{\mathbb Z}
\newcommand \abs[1]{\left|#1\right|}
\newcommand{\Hide}[1]{}
\begin{document}
\title{H\"olderian weak invariance principle under the Maxwell and Woodroofe condition}

\author{Davide Giraudo }

\date{\today}

\keywords{Invariance principle, martingales, Hölder spaces, strictly stationary process}

\subjclass[2010]{60F05; 60F17}

 \begin{abstract}
  We investigate the weak invariance principle in H\"older spaces under 
  some reinforcement of the Maxwell and Woodroofe condition. Optimality 
  of the obtained condition is established. 
 \end{abstract}
 
 \maketitle
 
 \section{Introduction and main results}
 
 Let $(\Omega, \mathcal F, \mu)$ be a probability space and let $T \colon  \Omega \to \Omega$
  be a measure-preserving bijective and bi-measurable map. Let $\mathcal M$ be a 
  sub-$\sigma$-algebra of $\mathcal F$ such that $T\mathcal M\subset\mathcal M$.
  If $f\colon\Omega\to \R$ a measurable function, we denote 
  $S_n(T,f):=\sum_{j=0}^{n-1}f\circ T^j$ and 
  \begin{equation}\label{def_partial_sum_process}
   W(n,f,T,t):=S_{[nt]}(T,f)+(nt-[nt])f\circ T^{[nt]}.   
  \end{equation}
  We shall write $S_n(f)$ and $W(n,f,t)$ for simplicity, except when $T$ is replaced 
  by $T^2$. 
  
  An important problem in probability theory is the understanding of 
  the asymptotic behavior of the process $(n^{-1/2} W(n,f,t),t\in [0,1])_{n\geqslant 1}$. 
  Conditions on the quantities $\mathbb E[S_n(f)\mid T\mathcal M]$ 
  and $S_n(f)-\mathbb E[S_n(f)\mid T^{-n}\mathcal M]$ have been investigated.
  The first result in this direction was obtained by Maxwell and Woodroofe 
  \cite{MR1782272}: if $f$ is $\mathcal M$-measurable and 
  \begin{equation}\label{MW_adapted}
   \sum_{n=1}^{+\infty}\frac{\norm{\mathbb E\left[S_n(f)\mid \mathcal M\right]}_2}{n^{3/2} }
   <+\infty,     
  \end{equation}
  then $(n^{-1/2}S_n(f) )_{n\geqslant 1}$ converges in distribution to $\eta^2 N$, 
  where $N$ is normally distributed and independent of $\eta$. 
  Then Voln\'y \cite{MR2283254} proposed a method to treat the nonadapted case. 
  Peligrad and Utev \cite{MR2123210} proved the weak invariance principle 
  under condition \eqref{MW_adapted}. The nonadapted case was addressed 
  in \cite{MR2344817}. Peligrad and Utev also showed that condition \eqref{MW_adapted}
  is optimal among conditions on the growth of the sequence 
  $\left(\norm{ \mathbb E\left[S_n(f)\mid \mathcal M \right]}_2\right)_{n\geqslant 1}$:
  if 
  \begin{equation}\label{weaked_MW}
   \sum_{n=1}^{+\infty}a_n\frac{\norm{\mathbb E\left[S_n(f)\mid \mathcal M\right]}_2}{n^{3/2} }<\infty
  \end{equation}
 for some sequence $(a_n)_{n \geqslant 1}$ converging to $0$, 
 the sequence $(n^{-1/2}S_n(f))_{n \geqslant 1}$ 
 is not necessarily stochastically bounded (Theorem~1.2. of \cite{MR2123210}). 
 Voln\'y constructed  \cite{MR2679961} an example satisfying \eqref{weaked_MW} and 
 such that the sequence $\left(\norm{S_n(f)}_2^{-1}S_n(f)\right)_{n\geqslant 1}$ 
 admits two subsequences which converge weakly to two different distributions. 
 
 Let us denote by $\mathcal H_{\alpha}$ the space of H\"older continuous functions, that is, 
 the functions $x \colon [0,1]\to \R$ such that $\norm{x}_{\mathcal H_\alpha}:=
 \sup_{0\leqslant s<t\leqslant 1}\abs{x(t)-x(s)}/(t-s)^\alpha+\abs{x(0)}$ is finite. 
 Since the paths of Brownian motion belong almost surely to $\mathcal H_\alpha$ for 
 each $\alpha\in (0,1/2)$ as well as $W(n,f,\cdot)$, we can investigate the weak 
 convergence of the sequence $(n^{-1/2}W(n,f,\cdot))_{n\geqslant 1}$ in the 
 the space $\mathcal H_{\alpha}$, for $0<\alpha<1/2$.
 The case of i.i.d. sequences and stationary martingale difference 
 sequences have been addressed respectively by Ra\v{c}kauskas and Suquet (Theorem~1 of 
 \cite{MR2000642}) and Giraudo (Theorem~2.2 of \cite{MR3426520}).
 In this note, we focus on conditions on the sequences 
 $\left(\mathbb E[S_n(f)\mid \mathcal M]\right)_{n\geqslant 1}$ and 
 $\left(S_n(f)-\mathbb E[S_n(f)\mid T^{-n}\mathcal M]\right)_{n\geqslant 1}$.

  \begin{Theorem}\label{thm:main_result} 
   Let $p>2$ and $f\in \mathbb L^p$. If 
   \begin{equation}\label{MWp}  
    \sum_{k=1}^{+\infty} \frac{\norm{\mathbb E[S_k(f)\mid \mathcal M}_p }{k^{3/2}}<+\infty, 
    \quad \sum_{k=1}^{+\infty} \frac{\norm{S_k(f) -
    \mathbb E[S_k(f)\mid T^{-k} \mathcal M}_p }{k^{3/2}}<+\infty,
   \end{equation}
  then the sequence $\left(n^{-1/2}W(n,f)\right)_{n\geqslant 1}$ 
  converges weakly to the process $\sqrt\eta W$ in $\mathcal H_{1/2-1/p}$, where $W$ 
  is the Brownian motion and the random variable $\eta$ is independent of $W$ and 
  is given by $\eta=\lim_{n\to +\infty}\mathbb E\left[S_n(f)^2\mid\mathcal I
  \right]/n$ (where $\mathcal I$ is the $\sigma$-algebra of invariant sets and the 
  limit is in the $\mathbb L^1$ sense).
  \end{Theorem}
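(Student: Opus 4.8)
The plan is to pass to the Ciesielski (Faber--Schauder) description of the Hölder norm, thereby reducing the statement to convergence of the finite-dimensional distributions together with tightness in the separable subspace $\mathcal H_{1/2-1/p}^{o}$, and to establish the latter by combining a scale-dependent truncation with a martingale approximation. Write $\alpha:=1/2-1/p$ and $\zeta_n:=n^{-1/2}W(n,f,\cdot)$. Recall that for $x\in\mathcal H_\alpha^{o}$ one has $\norm{x}_{\mathcal H_\alpha}\asymp\abs{x(0)}+\sup_{j\geq 0}2^{j\alpha}\max_{0\leq l<2^{j}}\abs{\lambda_{j,l}(x)}$, where $\lambda_{j,l}(x):=x\bigl((2l+1)2^{-j-1}\bigr)-\tfrac12 x\bigl(l2^{-j}\bigr)-\tfrac12 x\bigl((l+1)2^{-j}\bigr)$, and that $x$ belongs to $\mathcal H_\alpha^{o}$ precisely when $2^{j\alpha}\max_l\abs{\lambda_{j,l}(x)}\to0$. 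Since weak convergence in $\mathcal H_\alpha$ is equivalent to weak convergence in $\mathcal H_\alpha^{o}$, by the sequential tightness criterion of Ra\v{c}kauskas and Suquet \cite{MR2000642} it is enough to prove (i) convergence of the finite-dimensional distributions of $\zeta_n$ to those of $\sqrt\eta W$, and (ii) $\lim_{J\to\infty}\limsup_{n\to\infty}\mu\bigl(\sup_{j\geq J}2^{j\alpha}\max_{0\leq l<2^{j}}\abs{\lambda_{j,l}(\zeta_n)}>\varepsilon\bigr)=0$ for every $\varepsilon>0$. Point (i), together with the identification of $\eta$, follows from the invariance principle in $C[0,1]$ valid under the Maxwell--Woodroofe condition \eqref{MW_adapted} and its non-adapted extension (\cite{MR2123210,MR2283254,MR2344817}), because \eqref{MWp} implies the corresponding $\mathbb L^{2}$ conditions. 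The work is entirely in (ii).

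For (ii) I would first record the $\mathbb L^{p}$ maximal inequality $\norm{\max_{1\leq k\leq n}\abs{S_k(f)}}_p\leq C_p\sqrt n\,\mathbf c_p(f)$, where $\mathbf c_p(f)$ is the sum of $\norm f_p$ and the two series appearing in \eqref{MWp}; this is the $\mathbb L^{p}$ analogue of the Peligrad--Utev maximal inequality, obtained by the same dyadic chaining with the Burkholder--Rosenthal inequality replacing $\mathbb L^{2}$-orthogonality and the time-reversed filtration $(T^{-k}\mathcal M)_{k}$ used to absorb the non-adapted part. Since $\lambda_{j,l}(\zeta_n)$ is, up to an interpolation error bounded uniformly in the scale by $n^{-1/p}\max_{i\leq n}\abs{f\circ T^{i}}$, a normalized difference of two consecutive block sums of length $\asymp n2^{-j-1}$, this inequality together with stationarity gives $\mathbb E\bigl[\bigl(2^{j\alpha}\max_l\abs{\lambda_{j,l}(\zeta_n)}\bigr)^{p}\bigr]\leq C\,\mathbf c_p(f)^{p}$, uniformly in $j$ and $n$ — the exponent $\alpha=1/2-1/p$ being exactly critical for such an estimate. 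This settles the finitely many coarsest scales; and at the finest scales $j\geq\log_2 n-A$, where the increment involves $O(2^{A})$ terms $f\circ T^{i}$, one has $2^{j\alpha}\max_l\abs{\lambda_{j,l}(\zeta_n)}\leq 2^{A}n^{-1/p}\max_{i\leq n}\abs{f\circ T^{i}}\to0$ in probability for each fixed $A$, since $f\in\mathbb L^{p}$. The remaining obstruction is the $\asymp\log n$ intermediate scales, over which the borderline estimate above would cost a logarithmic factor when summed.

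To remove this loss I would, at each intermediate scale $j$, split $f$ at a level $c_j\asymp\sqrt n\,2^{-j(1-\alpha)}$ into a bounded part $f_j'=f\mathbf 1_{\{\abs f\leq c_j\}}-\mathbb E[f\mathbf 1_{\{\abs f\leq c_j\}}]$ and a remainder $f_j''$, and restrict to a scale range — say $J\leq j\leq\tfrac{p-2}{p+2}\log_2 n$, on which $c_j\geq n^{1/p}$ — chosen so that the competing geometric factors balance. For $f_j'$ I would use the martingale approximation furnished by \eqref{MWp}, writing $f_j'=m_j+\theta_j-\theta_j\circ T+\rho_j$ with $m_j$ a stationary $\mathbb L^{p}$ martingale difference: the coboundary affects $\lambda_{j,l}$ only through $\max_{i\leq n}\abs{\theta_j\circ T^{i}}$ and is $\mathcal H_\alpha$-negligible because $p>2$, the error $\rho_j$ is small in the maximal norm, and for the martingale term a Freedman-type exponential inequality makes the block sums sub-Gaussian, so that the union bound over the $2^{j}$ dyadic cells costs only $\sqrt j$ rather than $2^{j/p}$ — which is precisely what defeats the criticality (equivalently, one invokes the Hölderian martingale invariance principle of \cite{MR3426520}). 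For $f_j''$ one uses $n\,\mu(\abs f>c_j)\leq c_j^{-p}\mathbb E[\abs f^{p}\mathbf 1_{\{\abs f>c_j\}}]$ and the bound on the number of cells; the choice of the range makes the corresponding sum over $j$ dominated by its last term, namely $\mathbb E[\abs f^{p}\mathbf 1_{\{\abs f>n^{1/p}\}}]\to0$. Summing these subcritical per-scale estimates and letting $n\to\infty$, then $J\to\infty$ (and $A\to\infty$ where it appears), yields (ii).

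The main obstacle is exactly this multiscale estimate at the critical Hölder exponent: the plain $\mathbb L^{p}$ maximal inequality is borderline, so a genuine improvement — an exponential inequality for the bounded truncated summands — is unavoidable, and such an inequality is not available directly under a Maxwell--Woodroofe-type hypothesis; this is why the reinforcement to $\mathbb L^{p}$ in \eqref{MWp} and the detour through a martingale approximation (where Freedman's inequality does apply) are needed. A secondary technical nuisance is that truncation does not commute with the conditioning in \eqref{MWp}, so building the martingale approximation for $f_j'$ with a quantitatively controlled error $\rho_j$ requires some care.
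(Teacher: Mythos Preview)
Your overall plan---Schauder decomposition, splitting into coarse/intermediate/fine scales, and upgrading the borderline $\mathbb L^p$ maximal inequality at intermediate scales---is a natural direct attack, and it is genuinely different from the paper's proof. The paper never truncates and never works scale by scale. Instead it introduces an abstract operator $P_T$ (two choices, one for the adapted part $\mathbb E[f\mid\mathcal M]$ and one for $f-\mathbb E[f\mid\mathcal M]$), proves a single weak-$\mathbb L^p$ maximal inequality giving $\norm{\norm{n^{-1/2}W(n,h)}_{\mathcal H_{1/2-1/p}}}_{p,\infty}\leqslant C\norm{h}_{\mathrm{MW}(p,P_T)}$ uniformly in $n$, and then observes that $P_T$ is mean-ergodic on the Banach space $\mathrm{MW}(p,P_T)$, so that $(I-P_T)\mathrm{MW}(p,P_T)$ is dense. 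Any $g$ in this dense subspace has an \emph{exact} martingale--coboundary decomposition in $\mathbb L^p$, hence satisfies the Hölderian invariance principle by \cite{MR3426520}; the uniform bound transfers tightness from $g$ to $f$. This is an ``$\varepsilon/2$ in the MW-norm'' argument, and it sidesteps precisely the difficulty you flag at the end.

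That difficulty is not secondary; it is the gap in your proposal. Condition~\eqref{MWp} does \emph{not} furnish a martingale--coboundary approximation of $f$ (let alone of $f_j'$) with an $\mathbb L^p$-small remainder in the sense you need: as noted in the paper (Remark~1.3 here), obtaining $\norm{\norm{W(n,f)-W(n,m)}_{\mathcal H_{1/2-1/p}}}_{p,\infty}=o(\sqrt n)$ requires the strictly stronger condition $\sum_k k^{-1-1/p}\norm{\mathbb E[S_k(f)\mid\mathcal M]}_p<\infty$. Moreover, truncation does not preserve \eqref{MWp} with usable constants, because $\norm{\mathbb E[S_k(f_j')\mid\mathcal M]}_p\leqslant \norm{\mathbb E[S_k(f)\mid\mathcal M]}_p+k\norm{f_j''}_p$ and the second term destroys the summability in $k$; so you cannot simply build a martingale approximation for $f_j'$ from \eqref{MWp}. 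Finally, even if you had an approximation $f=m+(\theta-\theta\circ T)+\rho$ coming from the MW condition for $f$ itself, the martingale increment $m$ inherits no boundedness from the truncation of $f$, so Freedman's inequality is not available for it. In short, the step ``for $f_j'$ use the martingale approximation furnished by \eqref{MWp} and then apply an exponential inequality to the bounded martingale part'' does not go through as written; the paper's density argument is exactly what replaces it.
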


  Of course, if $f$ is $\mathcal M$-measurable, all the 
 terms of the second series vanish and we only have to check the convergence 
 of the first series.
 
 \begin{Remark}
  If the sequence $(f\circ T^j)_{j\geqslant 0}$ 
  is a martingale difference sequence with respect to the filtration 
  $(T^{-i}\mathcal M)$, then condition \eqref{MWp} is satisfied if and only if 
  the function $f$ belongs to $\mathbb L^p$, hence we recover the result of 
  \cite{MR3426520}. However, if the sequence $(f\circ T^j)_{j\geqslant 0}$ 
  is independent, \eqref{MWp} is stronger than the sufficient condition 
  $t^p\mu\ens{\abs f>t}\to 0$. This can be explained by the fact that 
  the key maximal inequality \eqref{maximal_inequality} does not include 
  the quadratic variance term which appears in the 
  martingale inequality. In Remark~1 (after the proof of 
  Theorem~1) in \cite{MR2255301}, a version of \eqref{maximal_inequality} 
  with this term is obtained. In our context it seems that it 
  does not follow from an adaptation of the proof. 
 \end{Remark}

 \begin{Remark}\label{rem:comparison}
  In \cite{MR3426520}, the conclusion of Theorem~\ref{thm:main_result} was 
  obtained for an $\mathcal M$-measurable $f$ under the condition 
  \begin{equation}\label{Hannan} 
   \sum_{i=1}^\infty\norm{\mathbb E\left[f\mid T^i\mathcal M\right]
   -\mathbb E\left[f\mid T^{i+1} \mathcal M\right] }_p<\infty,
  \end{equation}
  which holds as soon as 
  \begin{equation}\label{eq:consequence_Hannan}
   \sum_{k=1}^{+\infty}\frac{\norm{\mathbb E\left[f\circ T^k \mid 
    \mathcal M\right]}_p}{k^{1/p}}<+\infty,
  \end{equation}
  while \eqref{MWp} holds as soon as 
  \begin{equation}\label{eq:projective_condition}
    \sum_{k=1}^{+\infty}\frac{\norm{\mathbb E\left[f\circ T^k \mid 
    \mathcal M\right]}_p}{\sqrt k}<+\infty.
   \end{equation}
 Therefore, \eqref{eq:projective_condition} gives a better 
 sufficient condition than \eqref{eq:consequence_Hannan} if we seek
 for conditions relying only on $\left(\norm{\mathbb E\left[f\circ T^k \mid 
    \mathcal M\right]}_p\right)_{k\geqslant 1}$.
    
  However, \eqref{Hannan} gives the existence of a martingale approximation 
  in the following sense: there exists a martingale difference $m
  \in\mathbb L^p(\mathcal M)$ such that 
  \begin{equation}\label{eq:martingale_approximation}
   \norm{\norm{W(n,f)-W(n,m)}_{\mathcal H_{1/2-1/p}}}_{p,\infty}=o(\sqrt n).
  \end{equation}
  Indeed, define for an integrable function $h$ and a non-negative integer $i$, $P_i(h):=
  \mathbb E\left[h\mid T^i\mathcal M\right]-
  \mathbb E\left[h\mid T^{i+1}\mathcal M\right]$. If $f$ satisfies 
  \eqref{Hannan}, then we set $m:=\sum_{i\geqslant 0}
  P_0\left(U^if\right)$. Then for any $K\geqslant 1$, the equality 
  $f-m=\sum_{i=0}^K\left(P_i(f)-P_0\left(U^if\right)\right)+\sum_{i=K+1}^{+\infty}
  \left(P_i(f)-P_0\left(U^if\right)\right)$ holds.
  Since $\sum_{i=0}^K\left(P_i(f)-P_0\left(U^if\right)\right)$ may be written as 
  $(I-U)g_K$, where $g_K$ is such that $t^p\mu\ens{\abs{g_K}>t}\to 0$ as $t$ goes to infinity, 
  we get, by inequalities (2.4) and (2.5) of \cite{MR3426520} that 
  \begin{multline*}
   \limsup_{n\to +\infty}
   \frac 1{\sqrt n}\norm{\norm{W(n,f)-W(n,m)}_{\mathcal H_{1/2-1/p}}}_{p,\infty}\\
   \leqslant \sum_{i\geqslant K+1}\limsup_{n\to +\infty}
   \frac 1{\sqrt n}\left(\norm{\norm{W(n,P_i(f)))}_{\mathcal H_{1/2-1/p}}}_{p,\infty}
   +\norm{\norm{W\left(n,P_0\left(U^i(f)\right)\right)}_{\mathcal H_{1/2-1/p}}}_{p,\infty}
   \right).
  \end{multline*}
  We conclude by Proposition~2.3 of \cite{MR3426520}. 
  
 The following condition (in the spirit of Maxwell and 
 Woodroofe's one) is sufficient for a martingale approximation in the sense of 
  \eqref{eq:martingale_approximation}:
  \begin{equation}\label{eq:MW_renforcee}
   \sum_{k=1}^{+\infty} \frac{\norm{\mathbb E[S_k(f)\mid \mathcal M}_p }{k^{1+1/p}}<+\infty.
  \end{equation}
 Indeed, Theorem~2.3 of \cite{MR3178473} gives a martingale differences sequence 
 $\left(m\circ T^i\right)_{i\geqslant 0}$ such that $
  \lim_{n\to +\infty}n^{-1/p}\norm{S_n(f-m)}_p=0$. Using Serfling 
  arguments (see \cite{MR0268938}), we get that \eqref{eq:MW_renforcee} implies 
 \begin{equation}\label{eq:martingale_approximation_Cuny_Merlevede}
   \lim_{n\to +\infty}
 n^{-1/p}\norm{\max_{1\leqslant i\leqslant n}\abs{S_i(f-m)}}_p=0.
 \end{equation}
 
 Note that for a function $h$, by Lemma~A.2 of \cite{MR3020943},
 $n^{-1/2}\norm{\norm{W(n,h)}_{\mathcal H_{1/2-1/p}}}_{p,\infty}
 \leqslant 2n^{-1/p}\norm{\max_{1\leqslant j\leqslant n}\abs{S_j(f)}}_{p,\infty}$, hence 
 by \eqref{eq:martingale_approximation_Cuny_Merlevede}, the martingale approximation 
 \eqref{eq:martingale_approximation} holds.

 Furthermore, using the construction given in \cite{MR2446326,MR2475604}, in 
 any ergodic dynamical system of positive entropy one can construct a function 
 satisfying condition \eqref{MWp} but not
 \eqref{Hannan} and vice versa. 
 \end{Remark}
 
 \begin{Remark}
  For the $\rho$-mixing coefficient defined by 
  \begin{equation*}
   \rho(n)=\sup\ens{\mathrm{Cov}(X,Y)/(\norm{X}_2\norm{Y}_2 ), 
   X \in\mathbb L^2(\sigma(f\circ T^i,i\leqslant 0),
   Y\in \mathbb L^2(\sigma(f\circ T^i, i\geqslant n)) }, 
  \end{equation*}
  Lemma~1 of \cite{MR2255301} shows that for an adapted process, 
  condition \eqref{MWp} is satisfied if the series $\sum_{n=1}^\infty 
  \rho^{2/p}(2^n)$ converges. However, the conclusion of 
  Theorem~\ref{thm:main_result} holds if $t^p\mu\ens{\abs f >t}\to 0$ 
  and $\sum_{n=1}^\infty \rho(2^n)$ converges (see Theorem~2.3, 
  \cite{1409.8169}), which is less restrictive. 
 \end{Remark}

 It turns out that even in the adapted case, condition \eqref{MWp} is sharp 
 among conditions on $\norm{\mathbb E[S_k(f)\mid \mathcal M}_p$ in the 
 following sense.
 
  \begin{Theorem}\label{optimality}
   For each sequence $(a_n)_{n\geqslant 1}$ converging to $0$ and each 
   real number $p>2$, there exists a strictly 
   stationary sequence $(f\circ T^j)_{j\geqslant 0}$ and a sub-$\sigma$-algebra 
   $\mathcal M$ such that $T\mathcal M\subset\mathcal M$, 
    \begin{equation}\label{MW_altered}
  \sum_{n=1}^{\infty}\frac{a_n}{n^{3/2} }\norm{\mathbb E[S_n(f)\mid  
   \mathcal M] }_p<\infty,   
 \end{equation}
  but the sequence $\left(n^{-1/2}W(n,f,t)\right)_{n\geqslant 1}$ is not 
  tight in $\mathcal H_{1/2-1/p}$.
  \end{Theorem}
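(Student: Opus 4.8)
I would keep $f$ adapted (so the second series in \eqref{MWp} is identically zero and $\mathbb E[S_n(f)\mid\mathcal M]$ is all one has to control) and build it, in a fixed aperiodic dynamical system, as a very lacunary series $f=\sum_{k\geq1}f_k$ of ``blocks'', the block $f_k$ being localised at a scale $N_k$ that may grow as fast as we wish; the whole scheme follows the pattern of Theorem~1.2 of \cite{MR2123210} and of the block constructions of \cite{MR2446326,MR2475604}.

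\emph{Reduction.} First I would record what must fail. By the Ra\v{c}kauskas--Suquet description of tightness in $\mathcal H_\alpha$ (the one underlying \cite{MR2000642,MR3426520}), non-tightness follows from the failure of asymptotic equicontinuity in the H\"older modulus $\omega_\alpha(x,\delta)=\sup_{0<t-s<\delta}\abs{x(t)-x(s)}/(t-s)^\alpha$. Since the increment of $n^{-1/2}W(n,f,\cdot)$ over an interval $[i/n,(i+m)/n]\subset[0,1]$ equals $n^{-1/2}S_m(f)\circ T^i$, it suffices to produce $\eps>0$, $c>0$ and, for \emph{every} $\delta>0$, infinitely many pairs $(n,m)$ with $m/n<\delta$ such that
\begin{equation*}
 \mu\Bigl(\max_{0\leq i\leq n-m}\bigl\lvert S_m(f)\circ T^i\bigr\rvert\geq \eps\, m^{1/2-1/p}\,n^{1/p}\Bigr)\geq c,
\end{equation*}
for then $\mu\bigl(\omega_{1/2-1/p}(n^{-1/2}W(n,f,\cdot),\delta)\geq\eps\bigr)\geq c$ with $\delta$ arbitrarily small. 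Moreover, since we keep $f$ in $\mathbb L^p$, Theorem~\ref{thm:main_result} forces the unweighted series $\sum_k\norm{\mathbb E[S_k(f)\mid\mathcal M]}_p k^{-3/2}$ to diverge, so the decay of $(a_n)$ has to be used precisely to split this divergent series from the convergent weighted one in \eqref{MW_altered}.

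\emph{Construction.} I would realise $f_k$ on a Rokhlin tower of height $N_k$ and base of measure $\asymp 1/N_k$: on the top $m_k$ levels $f_k$ takes values $\pm V_k/m_k$, with the sign renewed independently from one sweep of the tower to the next, and $f_k\equiv0$ elsewhere; here $m_k=N_k^{\beta}$ for a fixed $\beta\in\bigl(\tfrac{p-2}{2(p-1)},\tfrac12\bigr)$ and $V_k\asymp m_k^{1/2-1/p}N_k^{1/p}\to\infty$. Then $\norm{f_k}_p\asymp V_k m_k^{1/p-1}N_k^{-1/p}\asymp m_k^{-1/2}$, so $f\in\mathbb L^p$. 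The $\sigma$-algebra $\mathcal M$ is chosen so that $f$ is $\mathcal M$-measurable but each block is ``conditionally almost degenerate beyond its own scale'', i.e. $\norm{\mathbb E[S_n(f_k)\mid\mathcal M]}_p$ stays $\leq CV_k$ for all $n$, is of order $V_k(n/N_k)^{1/2}$ only for $n\lesssim N_k$, and is summable against $n^{-3/2}$ for $n\gg N_k$ — whereas $\norm{S_n(f_k)}_p\asymp V_k(n/N_k)^{1/2}$ for $n\gg N_k$. Finally one lets the $N_k$ grow so fast that $\sup_{n\geq m_k}a_n\leq2^{-k}$, $m_k^{-1/2}\sum_{n\leq m_k}a_n n^{-1/2}\leq2^{-k}$, $N_k$ lies in the region where $\sum_{n>N_k}a_n/n$ is already negligible against $V_kN_k^{-1/2}$, and $N_k$ dominates every quantity formed from $N_1,\dots,N_{k-1}$.

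\emph{Verification and main obstacle.} For \eqref{MW_altered} one splits $\sum_n a_n\norm{\mathbb E[S_n(f)\mid\mathcal M]}_p n^{-3/2}\leq\sum_k\sum_n a_n\norm{\mathbb E[S_n(f_k)\mid\mathcal M]}_p n^{-3/2}$ into the ranges $n<m_k$, $m_k\leq n\leq N_k$, $n>N_k$; inserting the bounds above and the value of $V_k$, the first two ranges are $\lesssim m_k^{-1/2}\sum_{n\leq m_k}a_n n^{-1/2}$ and $\lesssim\sup_{n\geq m_k}a_n$, hence summable, and the $n>N_k$ range is summable by the choice of $N_k$. For non-tightness one takes $n=N_k$, $m=m_k$: for $\mu$-almost every $\omega$ in the tower the orbit of length $N_k$ sweeps the top $m_k$ levels exactly once, so $\max_i\lvert S_{m_k}(f_k)\circ T^i\rvert\geq V_k\asymp\eps\,m_k^{1/2-1/p}N_k^{1/p}$ with probability $\geq\tfrac12$, and over such a short window the other blocks contribute $o\bigl(m_k^{1/2-1/p}N_k^{1/p}\bigr)$ — coarse blocks because $m_k$ consecutive steps rarely meet their top zone and otherwise collect only a smooth partial sum, fine blocks because $N_j\gg N_k$ — so the displayed inequality of the Reduction holds with $m_k/N_k\to0$. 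The hard part is the third paragraph: one must manufacture $\mathcal M$ so that $\norm{\mathbb E[S_n(f_k)\mid\mathcal M]}_p$ is substantially smaller than $\norm{S_n(f_k)}_p\asymp V_k(n/N_k)^{1/2}$ for $n\gg N_k$ — a fully $\mathcal M$-measurable block or a genuine martingale-difference block both fail, for opposite reasons (the first keeps $\norm{\mathbb E[S_n(f)\mid\mathcal M]}_p$ of order $\sqrt n$, the second makes it bounded and hence, via Theorem~\ref{thm:main_result}, tight) — while simultaneously keeping the blocks non-Gaussian enough at their own scale to kill equicontinuity and keeping the cross-scale interaction under control; this balancing act is exactly what the constructions of \cite{MR2123210,MR2446326,MR2475604} are designed for, and I would adapt them.
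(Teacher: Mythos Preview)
Your plan takes a genuinely different route from the paper's. The paper does not build $f$ via Rokhlin towers at all; it simply reuses the Markov-chain example of Proposition~1 in \cite{MR2255301}: a chain $(Y_k)_{k\geqslant 0}$ on $\Z^+$ which from state $0$ jumps to $u_j-1$ with probability $p_{u_j}\propto j\,u_j^{-1-p/2}$ (for a rapidly growing sequence $(u_j)$ tuned to $(a_n)$) and otherwise steps down deterministically by one; one sets $f=\mathbf 1\{Y_0=0\}-\pi_0$ and $\mathcal M=\sigma(Y_k,\,k\leqslant 0)$. Condition \eqref{MW_altered} is then already contained in \cite{MR2255301}, and the only new work is non-tightness. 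For that the paper exploits the exact renewal identity $S_{T_k}-S_{T_{k-1}}=1-\pi_0\tau_k$, where the excursion lengths $\tau_k$ are i.i.d.\ copies of $\tau$; a single long excursion $\{\tau=u_j\}$ produces an increment of order $u_j$ over a window of length $u_j$, and along $n\asymp u_j^{(p+2)/2}$ a law-of-large-numbers\,/\,binomial computation gives $\limsup_n\mu\{\omega_{1/2-1/p}(n^{-1/2}W_n,\delta)\geqslant \eps\}=1$ for a fixed $\eps$ and \emph{every} $\delta>0$.

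What the paper's approach buys is that it sidesteps precisely the step you flag as the ``hard part'': $\mathcal M$ is just the canonical past of the chain, the conditional expectations $\mathbb E[S_n(f)\mid\mathcal M]$ have explicit formulas already estimated in \cite{MR2255301}, and no delicate balance between ``too adapted'' and ``too martingale'' is required. Your tower scheme in the style of \cite{MR2446326,MR2475604} is in principle workable, but as written it is incomplete exactly at that point: with a constant sign per sweep each block $f_k$ decomposes as a martingale increment on the base plus an $\mathbb L^p$ coboundary, so one must check that the cobounding functions $g_k$ (of $\mathbb L^p$-norm $\asymp m_k^{1/2}\to\infty$) do not accidentally sum in $\mathbb L^p$, and your claimed profile $\norm{\mathbb E[S_n(f_k)\mid\mathcal M]}_p\asymp V_k(n/N_k)^{1/2}$ for $n\lesssim N_k$ does not match that block structure (one rather gets a $1/p$ exponent from the hitting probability of the top zone). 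None of this is fatal, but it is real work you have not done; the Markov-chain route avoids all of it.
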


  \begin{Remark}
   Using the inequalities  in \cite{MR2255301} in order to bound 
   $\norm{\mathbb E\left[S_n(f)\mid  
   T\mathcal M\right]}_2$, we can see that the constructed $f$ in the 
   proof of Theorem~\ref{optimality} satisfies the classical 
   Maxwell and Woodroofe condition \eqref{MW_adapted} (the fact that $p$ is strictly 
   greater than $2$ is crucial),  hence the weak invariance principle in 
   the space of continuous functions takes place.
   
   However, it remains an open question whether condition~\eqref{MW_altered} 
   implies the central limit theorem or the weak invariance 
   principle (in the space of continuous functions).
  \end{Remark}

\section{Proofs}

We may observe that condition \eqref{MWp} implies by Theorem~1 of \cite{MR2255301}
that the sequence $\left(S_n(f)/\sqrt n\right)_{n\geqslant 1}$ is bounded in $\mathbb L^p$; 
nevertheless the counter-example given in Theorem~2.6 of \cite{1409.8169} shows that 
we cannot deduce the weak invariance principle from this. 

We shall rather work with a tighness criterion.
The analogue of the continuity modulus in $C[0,1]$ is $\omega_\alpha$, defined by 
  \begin{equation*}
  \omega_\alpha(x,\delta)=\sup_{0<\abs{t-s}<\delta}\frac{\abs{x(t)-x(s)}}{\abs{t-
  s}^\alpha},\quad  x\colon [0,1]\to \R, \delta\in (0,].
  \end{equation*}
  Define $\mathcal H_\alpha^o[0,1]:=
  \ens{x\in \mathcal H_\alpha[0,1],
  \lim_{\delta\to 
  0}\omega_\alpha(x,\delta)=0}$. We shall essentially work with the space 
  $\mathcal H_\alpha^o[0,1]$ which, 
  endowed with $\norm\cdot_\alpha\colon x\mapsto  \omega_\alpha(x,1)+\abs{x(0)}$, 
  is a separable Banach space 
  (while $\mathcal H_\alpha[0,1]$ is not). Since the canonical 
  embedding $\iota\colon \mathcal H^o_\alpha[0,1]\to \mathcal H_\alpha[0,1]$
  is continuous, each 
  convergence in distribution in $\mathcal H_\alpha^o[0,1]$ also takes place in 
  $\mathcal H_\alpha[0,1]$. 

  Let us state the tighness criterion we shall use (Theorem~13 of \cite{MR1736910}).
\begin{Proposition}\label{propo:tightness}
Let $\alpha\in (0,1)$.
 A sequence of processes $(\xi_n)_{n\geqslant 1}$ with paths in $\mathcal H_\alpha^o[0,1]$ 
 and such that $\xi_n(0)=0$ for each $n$ is tight in $\mathcal H_\alpha^o[0,1]$ if and 
 only if 
 \begin{equation}\label{eq:tightness_criterion}
 \forall \varepsilon>0,\quad \lim_{\delta\to 0}\sup_{n\to +\infty}
  \mu\ens{\omega_\alpha\left(\xi_n,\delta\right)>\varepsilon}=0.
 \end{equation}
\end{Proposition}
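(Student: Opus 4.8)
The plan is to reduce the statement to an Arzelà--Ascoli type description of the relatively compact subsets of the separable Banach space $\mathcal H_\alpha^o[0,1]$, combined with the fact that tightness of $(\xi_n)$ means precisely that for every $\eps>0$ there is a compact set $K_\eps\subset\mathcal H_\alpha^o[0,1]$ with $\inf_n\mu\ens{\xi_n\in K_\eps}\geq 1-\eps$.

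The crux is therefore the following claim: \emph{a set $K\subset\mathcal H_\alpha^o[0,1]$ is relatively compact if and only if it is bounded for $\norm\cdot_\alpha$ and $\lim_{\delta\to 0}\sup_{x\in K}\omega_\alpha(x,\delta)=0$}. For the forward direction, if $K$ is compact but $\sup_{x\in K}\omega_\alpha(x,\delta)\not\to 0$, I would pick $\eps>0$, $\delta_k\downarrow 0$ and $x_k\in K$ with $\omega_\alpha(x_k,\delta_k)>\eps$, extract a subsequence converging in $\norm\cdot_\alpha$ to some $x\in\mathcal H_\alpha^o[0,1]$, and derive a contradiction from $\omega_\alpha(x_k,\delta_k)\leq\norm{x_k-x}_\alpha+\omega_\alpha(x,\delta_k)$ together with $\omega_\alpha(x,\delta_k)\to 0$. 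For the converse, starting from a sequence $(x_k)$ in such a $K$, uniform boundedness of the $\alpha$-Hölder norms yields equicontinuity, so the classical Arzelà--Ascoli theorem in $C[0,1]$ gives a subsequence $x_{k_j}$ converging uniformly to some $x$; lower semicontinuity of $\omega_\alpha(\cdot,\delta)$ under pointwise convergence gives $\omega_\alpha(x,\delta)\leq\sup_k\omega_\alpha(x_k,\delta)$, whence $x\in\mathcal H_\alpha^o[0,1]$, and one upgrades uniform convergence to $\norm\cdot_\alpha$ convergence by splitting $\omega_\alpha(x_{k_j}-x,1)$ into the contribution of pairs with $\abs{t-s}<\delta$, which is at most $2\sup_k\omega_\alpha(x_k,\delta)$ and hence small uniformly in $j$, and that of pairs with $\abs{t-s}\geq\delta$, which is at most $2\delta^{-\alpha}\norm{x_{k_j}-x}_\infty\to 0$ for fixed $\delta$. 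I expect this last upgrade to be the one genuinely delicate point: it is exactly where the little-$o$ condition defining $\mathcal H_\alpha^o[0,1]$ is needed, the space $\mathcal H_\alpha[0,1]$ itself being non-separable and too large for any such compactness statement.

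Granting the claim, both implications are short. Assume \eqref{eq:tightness_criterion} and fix $\eps>0$; choosing $\delta_m\downarrow 0$ with $\sup_n\mu\ens{\omega_\alpha(\xi_n,\delta_m)>1/m}<\eps 2^{-m}$ for every $m\geq 1$ and setting $K:=\ens{x\in\mathcal H_\alpha^o[0,1]\colon x(0)=0,\ \omega_\alpha(x,\delta_m)\leq 1/m\ \text{for all }m\geq 1}$, the elementary inequality $\omega_\alpha(x,1)\leq\lceil 1/\delta_1\rceil\,\omega_\alpha(x,\delta_1)$ shows $K$ is bounded, $\sup_{x\in K}\omega_\alpha(x,\delta)\to 0$ holds by construction, so $K$ is relatively compact by the claim, while $\mu\ens{\xi_n\notin K}\leq\sum_{m\geq 1}\mu\ens{\omega_\alpha(\xi_n,\delta_m)>1/m}<\eps$ for every $n$ (using $\xi_n(0)=0$); hence $(\xi_n)$ is tight. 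Conversely, if $(\xi_n)$ is tight, fix $\eps,\eta>0$, take a compact $K$ with $\mu\ens{\xi_n\notin K}\leq\eta$ for all $n$, use the claim to obtain $\delta_0>0$ with $\omega_\alpha(x,\delta)<\eps$ for all $x\in K$ and $\delta<\delta_0$, and observe that $\ens{\omega_\alpha(\xi_n,\delta)>\eps}\subset\ens{\xi_n\notin K}$ for $\delta<\delta_0$, so that $\sup_n\mu\ens{\omega_\alpha(\xi_n,\delta)>\eps}\leq\eta$; letting $\delta\to 0$ and then $\eta\to 0$ yields \eqref{eq:tightness_criterion}.
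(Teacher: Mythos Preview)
Your argument is correct. The paper, however, does not prove this proposition at all: it simply quotes it as Theorem~13 of \cite{MR1736910} (Suquet, 1999), where the result is obtained in the general framework of Schauder-decomposable Banach spaces. What you have written is a direct, self-contained proof tailored to $\mathcal H_\alpha^o[0,1]$, bypassing the Schauder-basis machinery in favour of an Arzel\`a--Ascoli characterisation of relatively compact sets. Both routes are standard; yours has the advantage of being elementary and of making transparent exactly where the little-$o$ condition enters (the upgrade from uniform to H\"older convergence), while the cited reference gives a result valid in a much wider class of function spaces. One very minor remark: in your sufficiency step the set $K$ you build is in fact closed (each $\omega_\alpha(\cdot,\delta_m)$ is lower semicontinuous on $\mathcal H_\alpha^o[0,1]$), so it is genuinely compact rather than merely relatively compact, which is what tightness requires.
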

 
In order to prove the weak convergence in $\mathcal H_\alpha^o[0,1]$, it suffices to 
prove the convergence of the finite dimensional distributions and establish tighness 
in this space. 

\subsection{A maximal inequality} 

For $p>2$, we define  
 \begin{equation}
  \norm{h}_{p,\infty}:=\sup_{\mathclap{\substack{A\in\mathcal F\\ 
  \mu(A)>0} } }\:\frac 1{\mu(A)^{1-1/p} }\mathbb E[\abs h\mathbf 1_A].     
 \end{equation}
 This norm is linked to the tail function of $h$ by the following inequalities 
 (see Exercice 1.1.12 p.~13 in \cite{MR3243734}):
 \begin{equation}\label{eq:comparison_norm_p_infty}
  \left(\sup_{t>0}t^p\mu\ens{\abs h>t }\right)^{1/p}  
  \leqslant \norm{h}_{p,\infty}\leqslant\frac p{p-1} 
  \left(\sup_{t>0}t^p\mu\ens{\abs h>t }\right)^{1/p}.
 \end{equation}
 As a consequence, if $N$ is an integer and $h_1,\dots,h_n$ are functions, then  
 \begin{equation}\label{weak_lp_norm_max}
  \norm{\max_{1\leqslant j\leqslant N}\abs{h_j}}_{p,\infty} 
 \leqslant \frac p{p-1} N^{1/p} 
 \max_{1\leqslant j\leqslant N}\norm{\abs{h_j}}_{p,\infty}.
 \end{equation}

For a positive $n\geqslant 1$, a function $f\colon\Omega\to\R$ and a 
measure-preserving map $T$, we define 
\begin{equation}
 M(n,f,T):=\max_{0\leqslant i<j\leqslant n}\frac{\abs{S_j(T,f)-S_i(T
 ,f)}}{(j-i)^{1/2-1/p}}.
\end{equation}
 By Lemma~A.2 of \cite{MR3020943}, the H\"olderian norm of a polygonal line is reached 
 at two vertices, hence  
 \begin{equation}\label{eq:link_M_normW}
  M(n,f,T)=n^{1/2-1/p}\norm{W(n,f,T,\cdot)}_{\mathcal H_{1/2-1/p}}
 \end{equation}

 Applying Proposition~2.3 of \cite{MR3426520}, we can find for each 
 $p>2$ a constant $C_p$ depending only on $p$ such that if $(m\circ T^i)_{i\geqslant 1}$ 
 is a martingale difference sequence, then for each $n$, 
 \begin{equation}\label{maximal_inequality_martingale}
  \frac 1{\sqrt n}\norm{\norm{W(n,m,T,\cdot)}_{\mathcal H_{1/2-1/p}}}_{p,\infty} 
  \leqslant C_p\norm{m}_p.
 \end{equation}
 
 In the sequel, fix such a constant $C_p$ that we shall choose greater than 
 $6\cdot 2^{1/p}p/(p-1)$. 
 We denote by $U$ the Koopman operator 
 associated with $T$, that is, 
 for each $f\colon\Omega\to\R$ and each $\omega\in \Omega$, $(Uf)(\omega)=f(T\omega)$.
  
 \begin{Definition}\label{dfn:definition_condition_C}
 Let $H$ be a closed subspace of $\mathbb L^p$. 
  Let $P$ be a linear operator from $H$ to 
  itself. We say that $\left(H,P\right)$ satisfies 
  condition $(C)$ if 
  \begin{enumerate}
   \item\label{item:inclusiosn} the inclusion 
   $U^{-1}H\subset H$ holds (respectively the inclusion 
   $UH\subset H$ holds);
   \item\label{item:power-bounded} $P$ is power bounded on $H$, that is, for each $h\in H$, 
   \begin{equation}
   K(P):= \sup_{n\geqslant 1}\sup_{h\in H\setminus\ens{0}}\frac{\norm{P^nh}_p}{\norm{h}_p} 
   <+\infty~;
   \end{equation}
   \item\label{item:PT_martingale} if $h\in H$ is such that $Ph=0$, then 
   the sequence $(h\circ T^i)_{i\geqslant 0}$ is a martingale 
   difference sequence with respect to the filtration 
   $\left(T^{-i}\mathcal M\right)_{i\geqslant 0}$ 
   (respectively
   $\left(T^{-i-1}\mathcal M\right)_{i\geqslant 0}$); 
   \item\label{item:PTU=I} $PU^{-1}f=f$ for each $f\in H$  (respectively 
   $PUf=f$ for each $f\in H$).
  \end{enumerate}
 \end{Definition}
 
 Let us give two examples of subspace $H$ and operator $P$ satisfying condition (C). 
 \begin{enumerate}
  \item Let $H$ be the subspace of $\mathbb L^p$ which consists of 
  $\mathcal M$-measurable functions and $Ph:=\mathbb E\left[Uh\mid 
  \mathcal M\right]$. Then $\left(H,P\right)$ satisfies condition (C). 
  
  \item Let $H$ be the subspace of $\mathbb L^p$ which consists of 
   functions $h$ such that $\mathbb E\left[h\mid \mathcal M\right]=0$ 
   and $Ph:=U^{-1}h-\mathbb E\left[U^{-1}h\mid 
  \mathcal M\right]$. Then $\left(H,P\right)$ satisfies condition (C). 
 \end{enumerate}

 The goal of this subsection is to establish the following maximal inequality. 
 
\begin{Proposition}\label{propo_maximal_inequality}
 Let  $T\colon\Omega\to\Omega$ be a bijective and bi-measurable measure-preserving map. 
 Let $H$ be a closed subspace of $\mathbb L^p$. Let 
 $r$ be a positive integer. For each  
, operator $P$ from $H$ to 
  itself such that $\left(H,P\right)$ satisfies condition $(C)$, each $f\in H$ 
 and each integer $n$ satisfying $2^{r-1}\leqslant n<2^r$, 
 \begin{equation}\label{maximal_inequality}
  \norm{M(n,f,T)}_{p,\infty} 
  \leqslant C_pn^{1/p}\left(\left(1+K\left(P\right)\right)\norm{f}_p 
+K_p\sum_{j=0}^{r-1}2^{-j/2} 
  \norm{\sum_{i=0}^{2^j-1}P^if}_p    \right),
 \end{equation}
 where $K_p=2^{1/p-1/2}+2^{1/2}\left(1+K(P)\right)$.
\end{Proposition} 

If $H$ is a closed subspace of $\mathbb L^p$ and  
 $P\colon H\to H$ an operator such that $\left(H,P\right)$
satisfies condition $(C)$, we define for $f\in H$ the quantity
\begin{equation}
 \norm{f}_{\mathrm{MW}(p,P)}:=
 \sum_{j=0}^{+\infty}2^{-j/2}\norm{\sum_{i=0}^{2^j-1}P^if}_p
\end{equation}
and the vector space 
\begin{equation}
 \mathrm{MW}(p,P):=\ens{f\in H\mid \norm{f}_{\mathrm{MW}(p,P)}<+\infty}.
\end{equation}
Note that $\mathrm{MW}(p,P)$ endowed with $\norm{\cdot}_{\mathrm{MW}(p,P)}$ 
is a Banach space. 

Combining Proposition~\ref{propo_maximal_inequality} and \eqref{eq:link_M_normW}, 
we derive the following bound for the Hölderian norm of the partial sum process. 

\begin{Corollary}\label{cor:bound_norm_psp}
  Let $H$ be a closed subspace of $\mathbb L^p$
  and let $P$ be an operator from $H$ to itself such that 
  $\left(H,P\right)$ satisfies the condition $(C)$. Then 
  there exists a constant $C=C(p,P)$ such that 
 for each $n$, and each $h\in H$,
 \begin{equation}
  \norm{\norm{\frac 1{\sqrt n}W(n,h)}_{\mathcal H_{1/2-1/p}}}_{p,\infty}
  \leqslant C\norm{h}_{\mathrm{MW}(p,P)}
 \end{equation}
\end{Corollary}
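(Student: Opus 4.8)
The plan is to deduce the Corollary directly from Proposition~\ref{propo_maximal_inequality} together with the identity \eqref{eq:link_M_normW}, handling separately the two cases $P_TU^{-1}=\mathrm{Id}$ and $P_TU=\mathrm{Id}$ (which are symmetric). Fix $n\geqslant 1$ and pick the integer $r$ with $2^{r-1}\leqslant n<2^r$. By \eqref{eq:link_M_normW} we have $\norm{W(n,h,T,\cdot)}_{\mathcal H_{1/2-1/p}}=n^{-(1/2-1/p)}M(n,h,T)$, so
\[
 \norm{\norm{n^{-1/2}W(n,h)}_{\mathcal H_{1/2-1/p}}}_{p,\infty}
 =n^{-1}\,\norm{M(n,h,T)}_{p,\infty},
\]
and Proposition~\ref{propo_maximal_inequality} bounds the right-hand side by
$C_p n^{1/p-1}\bigl(\norm{h-U^{\pm}P_Th}_p+K_p\sum_{j=0}^{r-1}2^{-j/2}\norm{\sum_{i=0}^{2^j-1}P_T^ih}_p\bigr)$.
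Since $n^{1/p-1}\leqslant 1$, it remains only to control the bracketed expression by an absolute multiple of $\norm{h}_{\mathrm{MW}(p,P_T)}=\sum_{j=0}^{+\infty}2^{-j/2}\norm{\sum_{i=0}^{2^j-1}P_T^ih}_p$.

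The second term inside the bracket is already a partial sum of the series defining $\norm{h}_{\mathrm{MW}(p,P_T)}$, hence bounded by $K_p\norm{h}_{\mathrm{MW}(p,P_T)}$. For the first term, I would observe that the $j=0$ contribution to $\norm{h}_{\mathrm{MW}(p,P_T)}$ is exactly $\norm{h}_p$, so if we can bound $\norm{h-U^{\pm}P_Th}_p$ by a constant multiple of $\norm{h}_p$ we are done. In the case $P_TU^{-1}=\mathrm{Id}$ we use that $U^{-1}$ is an isometry of $\mathbb L^p$ (as $T$ is measure-preserving and bijective) and that $P_T$ is power bounded on $H$ by condition $(C)$\eqref{item:power-bounded}, giving $\norm{U^{-1}P_Th}_p=\norm{P_Th}_p\leqslant K(P_T)\norm{h}_p$ and therefore $\norm{h-U^{-1}P_Th}_p\leqslant(1+K(P_T))\norm{h}_p$; the case $P_TU=\mathrm{Id}$ is identical with $U$ in place of $U^{-1}$. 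Collecting the two pieces yields the Corollary with $C=C(p,P_T)=C_p\bigl(1+K(P_T)+K_p\bigr)$, which depends only on $p$ and on $K(P_T)$ (the quantity $K_p$ being itself an explicit function of $p$ and $K(P_T)$ by Proposition~\ref{propo_maximal_inequality}).

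There is essentially no obstacle here: the content is entirely in Proposition~\ref{propo_maximal_inequality}, and the Corollary is a bookkeeping step. The only mild point of care is that the bound in \eqref{maximal_inequality} is stated under the hypothesis $P_TU^{-1}=\mathrm{Id}$, while Corollary~\ref{cor:bound_norm_psp} is phrased for any $(T,(P_{T^k})_{k\geqslant 1})$ satisfying condition~$(C)$; one must therefore note that the case $P_TU=\mathrm{Id}$ reduces to the former either by the symmetric version of \eqref{maximal_inequality} (with $U^{\pm}=U$, as recorded in the statement of the Proposition) or by applying the result to the conjugated data, so that in both cases the same final constant works.
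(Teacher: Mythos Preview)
Your proposal is correct and follows exactly the route the paper indicates: combine Proposition~\ref{propo_maximal_inequality} with \eqref{eq:link_M_normW}, and absorb $\norm{h-U^{\pm}P_Th}_p\leqslant(1+K(P_T))\norm h_p$ into the $j=0$ term of $\norm{h}_{\mathrm{MW}(p,P_T)}$. There is a harmless arithmetic slip in your displayed identity (using \eqref{eq:link_M_normW} as stated gives the prefactor $n^{1/p-1}$ rather than $n^{-1}$, so after applying \eqref{maximal_inequality} the power of $n$ is $n^{2/p-1}$), but since $p>2$ this is still at most $1$ and the conclusion is unaffected.
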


The proof of Proposition~\ref{propo_maximal_inequality} is in 
the same spirit as the proof of Theorem~1 of \cite{MR2255301}, which 
is done by dyadic induction. To do so, we start from the following lemma:

\begin{Lemma}\label{lem_rec_relation} 
For each positive integer $n$, each function $h\colon\Omega\to \R$ and each 
measure-preserving map $T\colon\Omega\to\Omega$, the following inequality 
holds:
\begin{equation}\label{M(n,h,T)_rec} 
 M(n,h,T)\leqslant 6\max_{0\leqslant k\leqslant n}\abs{h\circ T^k}+ 
\frac 1{2^{1/2-1/p}}M\left(\left[\frac n2\right],h+h\circ T,T^2\right).  
\end{equation}
 
\end{Lemma}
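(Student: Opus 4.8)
The plan is to exploit the self-similar structure of the partial sums: summing $h$ over a block of consecutive integers is, up to at most one boundary term, the same as summing $g:=h+h\circ T$ over a block of integers for the map $T^2$. First I would fix $0\leqslant i<j\leqslant n$ and consider the increment $S_j(T,h)-S_i(T,h)=\sum_{k=i}^{j-1}h\circ T^k$. I would split this sum according to the parity of $k$. Writing $i'=\lceil i/2\rceil$ and $j'=\lfloor j/2\rfloor$ (or the analogous floor/ceiling, to be pinned down carefully), the ``bulk'' of the increment equals $\sum_{\ell=i'}^{j'-1}g\circ T^{2\ell}=S_{j'}(T^2,g)-S_{i'}(T^2,g)$, while the leftover consists of at most two single terms of the form $h\circ T^k$ with $i\leqslant k\leqslant j$. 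Hence
\begin{equation*}
 \abs{S_j(T,h)-S_i(T,h)}\leqslant \abs{S_{j'}(T^2,g)-S_{i'}(T^2,g)}+2\max_{0\leqslant k\leqslant n}\abs{h\circ T^k}.
\end{equation*}
Since $0\leqslant i'\leqslant j'\leqslant \lfloor n/2\rfloor$ and $j'-i'\leqslant (j-i)/2+1$, the first term is controlled, after dividing by the right power of $j-i$, using that $(j'-i')^{1/2-1/p}\leqslant (2(j'-i'))^{1/2-1/p}$ whenever $j'-i'\geqslant 1$; the delicate bookkeeping is relating $(j-i)^{1/2-1/p}$ to $(j'-i')^{1/2-1/p}$, which is where the factor $2^{1/2-1/p}$ and a residual constant come from.

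Next I would handle the degenerate cases separately. If $j-i=1$, the increment is a single term $h\circ T^i$, trivially bounded by $\max_{0\leqslant k\leqslant n}\abs{h\circ T^k}$, so there is nothing to do. If $j'=i'$ (which can happen when $j-i$ is small, $2$ or $3$), the bulk term is empty and again $\abs{S_j(T,h)-S_i(T,h)}$ is a sum of at most $3$ terms $h\circ T^k$, hence at most $3\max_k\abs{h\circ T^k}$ — well within the claimed bound. For $j'>i'$ one divides the displayed inequality by $(j-i)^{1/2-1/p}$, bounds $(j'-i')^{1/2-1/p}/(j-i)^{1/2-1/p}$ by $2^{-(1/2-1/p)}$ up to the $+1$ discrepancy in $j'-i'$ versus $(j-i)/2$, and bounds $2/(j-i)^{1/2-1/p}\leqslant 2$. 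Collecting all cases and taking the maximum over $0\leqslant i<j\leqslant n$ yields
\begin{equation*}
 M(n,h,T)\leqslant c_1\max_{0\leqslant k\leqslant n}\abs{h\circ T^k}+\frac{1}{2^{1/2-1/p}}M\left(\left[\frac n2\right],g,T^2\right),
\end{equation*}
and the only remaining task is to check that the accumulated constant $c_1$ on the max term does not exceed $6$. The extra slack beyond the ``morally correct'' constant $2$ comes from (i) the at-most-one-extra-index in $j'-i'\leqslant (j-i)/2+1$, which when $j-i$ is as small as $2$ forces $j'-i'$ to be comparable to $j-i$ rather than half of it, and (ii) absorbing the resulting $M([n/2],g,T^2)$ contributions from those short blocks back into the max term; a crude accounting of these contributions is what pushes the constant up to $6$.

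The main obstacle is the careful index arithmetic in splitting $[i,j)$ by parity while keeping precise control of the boundary indices, and then verifying that the constant in front of $\max_{0\leqslant k\leqslant n}\abs{h\circ T^k}$ can be taken to be exactly $6$; everything else is elementary. A clean way to organize this is to observe that for any $0\leqslant i<j\leqslant n$ one can always find $0\leqslant i'\leqslant j'\leqslant[n/2]$ with $\sum_{k=i}^{j-1}h\circ T^k=\big(\sum_{\ell=i'}^{j'-1}g\circ T^{2\ell}\big)+R$ where $R$ is a sum of at most two terms $h\circ T^k$, $0\leqslant k\leqslant n$, and $2(j'-i')\leqslant j-i\leqslant 2(j'-i')+2$; from this single combinatorial identity all the cases above follow by routine estimation.
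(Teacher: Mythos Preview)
Your plan is correct and is essentially the paper's argument: both reduce $S_j(T,h)-S_i(T,h)$ to a $T^2$-partial sum of $g=h+h\circ T$ by rounding the endpoints $i,j$ to nearby even integers and absorbing the boundary terms into $\max_{0\leqslant k\leqslant n}\abs{h\circ T^k}$. The paper rounds to $2\lfloor (i+2)/2\rfloor$ and $2\lfloor j/2\rfloor$, which produces up to three boundary terms and forces a separate treatment of $j-i\leqslant 4$, whence the constant~$6$.

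Your rounding $i'=\lceil i/2\rceil$, $j'=\lfloor j/2\rfloor$ is in fact cleaner than you think: one always has $j'-i'\leqslant (j-i)/2$ (check the four parity cases), so there is \emph{no} ``$+1$ discrepancy'' to worry about. When $j'>i'$ one gets $(j'-i')^{1/2-1/p}/(j-i)^{1/2-1/p}\leqslant 2^{-(1/2-1/p)}$ directly, and when $j'=i'$ the bulk vanishes and $j-i\leqslant 2$, so the whole increment is at most $2\max_k\abs{h\circ T^k}$. Hence your decomposition actually yields $c_1=2$, strictly better than the paper's~$6$; the hedging in your last two paragraphs about pushing the constant up to~$6$ is unnecessary.
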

 
\begin{proof}
 First, notice that if $1\leqslant j\leqslant n$, then $j=2\left[\frac{j}2 \right]$ 
 or $j=2\left[\frac{j}2 \right]+1$, hence 
 \begin{equation}
  \abs{S_j(h)-S_{2\left[\frac{j}2 \right]}(h) }  
  \leqslant \max_{0\leqslant k \leqslant n }\abs{h\circ T^k}.
 \end{equation}
 Similarly, we have 
 \begin{equation}
  \abs{S_i(h)-S_{2\left[\frac{i+2}2 \right]}(h)} 
  \leqslant 2\max_{0\leqslant k \leqslant n }\abs{h\circ T^k}.
 \end{equation}
 It thus follows that 
 \begin{equation}\label{bound_M(n,h,T)}
  M(n,h,T)\leqslant 4\max_{0\leqslant k\leqslant n}\abs{h\circ T^k}
  +\max_{0\leqslant i<j\leqslant n}\frac{\abs{S_{2\left[\frac{j}2 \right]}(h)-
  S_{2\left[\frac{i+2}2 \right]}(h)} }{(j-i)^{1/2-1/p}}.   
 \end{equation}
Notice that if $j\geqslant i+4$, then 
\begin{equation}
 1\leqslant \left[\frac j2\right]-\left[\frac{i+2}2\right]
 \leqslant \frac{j-i}2,    
\end{equation}
 and we derive the bound 
 \begin{align*}
  \max_{0\leqslant i<j\leqslant n}\frac{\abs{S_{2\left[\frac{j}2 \right]}(h)-
  S_{2\left[\frac{i+2}2 \right]}(h)} }{(j-i)^{1/2-1/p}} &\leqslant 
  \frac 1{2^{1/2-1/p}}\max_{0\leqslant u<v\leqslant \left[\frac n2\right]}
  \frac{\abs{S_{v}(T^2,h+h\circ T)-
  S_{u }(T^2,h+h\circ T)} }{(v-u)^{1/2-1/p}} +\\
  &+\quad\max_{\mathclap{\substack{0\leqslant i<j\leqslant n\\ j\leqslant i+4} } }\quad
  \abs{S_{2\left[\frac{j}2 \right]}(h)-S_{2\left[\frac{i+2}2 \right]}(h)}.
 \end{align*}
 Since for $j\leqslant i+4$, the number of terms of the form $h
 \circ T^q$ involved in 
 $S_{2\left[\frac{j}2 \right]}(h)-S_{2\left[\frac{i+2}2 \right]}(h)$ is 
 at most $2$, we conclude that 
 \begin{align*}
  \max_{0\leqslant i<j\leqslant n}\frac{\abs{S_{2\left[\frac{j}2 \right]}(h)-
  S_{2\left[\frac{i+2}2 \right]}(h)} }{(j-i)^{1/2-1/p}} &\leqslant 
  \frac 1{2^{1/2-1/p}}M\left(\left[\frac n2\right], h+h\circ T,T^2\right) +\\
  &+2\max_{0\leqslant k\leqslant n}\abs{h\circ T^k}.
 \end{align*}
 Combining this inequality with \eqref{bound_M(n,h,T)}, we obtain 
 \eqref{M(n,h,T)_rec}, which concludes the proof of 
 Lemma~\ref{lem_rec_relation}. 
\end{proof}
 
 Now, we establish inequality \eqref{maximal_inequality} 
 by induction on $r$. 
 
 \begin{proof}[Proof of Proposition~\ref{propo_maximal_inequality}] 
  We first assume that $PU^{-1}=\mathrm{Id}$ and $U^{-1}H\subset H$.
  We check the case $r=1$. Then necessarily $n=1$ and the expression 
  $M(n,f,t)$ reduces to $f$. Since $C_p$ and $K_p$ are greater than $1$, the result 
  is a simple consequence of the triangle inequality applied to 
  $f-U^{-1}Pf$ and $U^{-1}Pf$. 
  
  Now, assume that Proposition~\ref{propo_maximal_inequality} holds 
  for some $r$ and let us show that it takes place for $r+1$. 
  We thus consider an integer $n$ such that $2^r\leqslant n<2^{r+1}$, 
  a function $f\in H$, a measure-preserving map 
 $T\colon\Omega\to\Omega$ bijective and bi-measurable, and a sub-$\sigma$-algebra 
 $\mathcal M$ satisfying $T\mathcal M \subset \mathcal M$, a closed subspace $H$ of 
 $\mathbb L^2$ such that 
 $U^{-1}H\subset H$ and an operator $P\colon 
 H\to H$ such that $\left(H,P\right)$
 satisfies condition $(C)$ with $PU^{-1}=\mathrm{Id}$ and we have to 
 show that \eqref{maximal_inequality} holds with $r+1$ instead of $r$. 
  First, using inequality $M(n,f)\leqslant 
  M(n,f-U^{-1}Pf)+M(n,U^{-1}Pf)$ and 
  Lemma~\ref{lem_rec_relation} with $h :=U^{-1}Pf$, we derive 
  \begin{multline}
   M(n,f,T)\leqslant M\left(n,f-U^{-1}Pf,T\right)+6\max_{0\leqslant k\leqslant n}
  \abs{U^{-1}Pf\circ T^k}+\\
  +\frac 1{2^{1/2-1/p} }M\left(\left[\frac n2 
  \right],(I+U)U^{-1}Pf,T^2\right),
  \end{multline}
   hence taking the norm $\norm{\cdot}_{p,\infty}$, we obtain 
   by \eqref{weak_lp_norm_max} that 
  \begin{multline}\label{induction_step}
   \norm{M(n,f,T)}_{p,\infty}  \leqslant \norm{M(n,f-U^{-1}Pf,T)}_{p,\infty}+
   6(n+1)^{1/p}\frac p{p-1} \norm{U^{-1}Pf}_p+ \\ 
  +\frac 1{2^{1/2-1/p} }\norm{M\left(\left[\frac n2 
  \right],(I+U)U^{-1}Pf,T^2\right)}_{p,\infty}.
  \end{multline} 
   By inequality \eqref{maximal_inequality_martingale} and accounting the fact that 
   $6\cdot(n+1)^{1/p}p/(p-1)\leqslant C_pn^{1/p} $, we obtain 
   \begin{multline}\label{induction_step2}
   \norm{M(n,f,T)}_{p,\infty}  \leqslant C_pn^{1/p} \norm{f-U^{-1}Pf}_p+
   C_pn^{1/p}\norm{Pf}_p  +\\
  +\frac 1{2^{1/2-1/p} }\norm{M\left(\left[\frac n2 
  \right],(I+U)U^{-1}Pf,T^2\right)}_{p,\infty}.
   \end{multline}
 Since $2^{r-1}\leqslant \left[n/2\right]< 2^r$, we may apply the 
 induction hypothesis to the integer $\left[n/2\right]$, the function 
 $h:=(I+U^{-1})Pf$, $T^2$ instead of $T$ and $P^2$ instead 
 of $P$. This 
 gives 
  \begin{multline}
  \left[\frac n2 
  \right]^{-1/p} \norm{M\left(\left[\frac n2 
  \right],h,T^2\right)}_{p,\infty}
  \leqslant C_p\left(1+K\left(P^2\right)\right)\norm{h} _p+\\ 
  +C_p\widetilde{K_p}\sum_{j=0}^{r-1}2^{-j/2}\norm{ 
  \sum_{i=0}^{2^j-1}P^{2i}\left(I+U^{-1}\right)Pf}_p,    
  \end{multline}
  where $\widetilde{K_p}=2^{1/p-1/2}+2^{1/2}\left(1+K(P^2)\right)$.
 Notice that $\norm h_p 
  \leqslant 2\norm{Pf}_p$, and by item \ref{item:PTU=I} of 
  Definition~\ref{dfn:definition_condition_C}, 
  it follows that 
  \begin{equation}
   \sum_{i=0}^{2^j-1}P^{2i}\left(I+U^{-1}\right)Pf=
    \sum_{i=0}^{2^j-1}\left(P^{2i+1}f+P^{2i}f\right)=
    \sum_{i=0}^{2^{j+1}-1}P^if.
  \end{equation}

  Accounting 
  the inequality $K\left(P^2\right)\leqslant K\left(P\right)$ and 
  $\widetilde{K_p}\leqslant K_p$, we have 
  \begin{align*}
  \left[\frac n2 
  \right]^{-1/p} \norm{M\left(\left[\frac n2 
  \right],h,T^2\right)}_{p,\infty}
  &\leqslant 2\left(1+K\left(P\right)\right)C_p\norm{Pf}_p+C_pK_p\sum_{j=0}^{r-1}2^{-j/2}\norm{ 
 \sum_{i=0}^{2^{j+1}-1}P^if}_p\\ 
 &=2\left(1+K\left(P\right)\right)C_p\norm{Pf}_p+ 2^{1/2} C_pK_p\sum_{j=1}^{r}2^{-j/2}\norm{ 
 \sum_{i=0}^{2^{j}-1}P^if}_p 
  \end{align*}
  and we infer 
  \begin{multline}
 \norm{M\left(\left[\frac n2 
  \right],h,T^2\right)}_{p,\infty}\leqslant  \left(\frac n2\right)^{1/p} 
  \left(2\left(1+K\left(P\right)\right)-K_p\sqrt 2)\right)C_p\norm{Pf}_p\\ 
  +n^{1/p}  2^{1/2-1/p} C_pK_p\sum_{j=0}^{r}2^{-j/2}\norm{ 
 \sum_{i=0}^{2^{j}-1}P^if}_p.
  \end{multline}
 Pluggling this into \eqref{induction_step2}, we derive 
 \begin{multline}
   \norm{M(n,f,T)}_{p,\infty}  \leqslant C_pn^{1/p}\left(1+K\left(P\right)\right) 
   \norm{f}_p +
  n^{1/p} C_pK_p\sum_{j=0}^{r}2^{-j/2}\norm{\sum_{i=0}^{2^{j}-1}P^if}_p+
   \\ +C_pn^{1/p}\left(1+2^{1-1/p}(1+K(P))-2^{1/2-1/p}K_p\right)\norm{Pf}_p.
 \end{multline}
 The definition of $K_p$ implies that $2^{1/p-1/2}-\sqrt 2(1+K(P))-K_p=0$, hence 
 \eqref{maximal_inequality} is established. This concludes the 
 proof of Proposition~\ref{propo_maximal_inequality} in the case 
 $PU^{-1}=\mathrm{Id}$. 
  
 When $PU=\mathrm{Id}$ and $UH\subset H$ we do the same proof, but replacing each occurrence of 
 $U^{-1}$ by $U$. This ends the proof of Proposition~\ref{propo_maximal_inequality}. 
 \end{proof}
 
\subsection{Proof of Theorem~\ref{thm:main_result}}\label{mart_approx} 

Since the 
convergence of the finite dimensional distributions is contained in 
 the main result of \cite{MR2344817}, the only difficulty in 
 proving Theorem~\ref{thm:main_result} is to establish tightness. 
To this aim, we shall proceed as in the proof of 
Theorem~5.3 in \cite{1403.0772}.  

\begin{Proposition}\label{propo:tightness_under_condition_C}
 Let $T$ be a measure preserving map, $H$ a closed subspace of $\mathbb L^p$ ($p>2$) 
 and let $P$ be an  
 operator from $H$ to itself such that $\left(H,P\right)$ satisfies 
 condition $(C)$. 
 Assume that $h$ is an element of $H$ such that $\norm{h}_{\mathrm{MW}\left(p,P\right)}
 <+\infty$ 
 
 Then the sequence $(n^{-1/2}W(n,h))_{n\geqslant 1}$ is tight in 
 $\mathcal H_{1/2-1/p}$. 
\end{Proposition}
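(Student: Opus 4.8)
The strategy is to verify the tightness criterion of Proposition~\ref{propo:tightness} for $\xi_n:=n^{-1/2}W(n,h)$ (which has polygonal paths with $\xi_n(0)=0$, hence lies in $\mathcal H_\alpha^o[0,1]$ with $\alpha:=1/2-1/p$) by isolating, at a free scale parameter $m$, a piece that is asymptotically Brownian and a remainder made uniformly small by Corollary~\ref{cor:bound_norm_psp}. Write $Q:=P_T$; we may assume $P_TU^{-1}=\mathrm{Id}$, the other case being identical with $U$ in place of $U^{-1}$ (note that $P_TU^{-1}=\mathrm{Id}$ forces $U^{-1}H\subset H$, since $P_T$ is only defined on $H$). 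For $m\geqslant1$ set $A_mh:=\frac1m\sum_{k=1}^m\sum_{i=0}^{k-1}Q^ih=\sum_{i=0}^{m-1}\left(1-\frac im\right)Q^ih\in H$; a direct computation gives $(I-Q)A_mh=h-\frac1m\sum_{i=1}^mQ^ih$, so that
\[
 h=D_m+C_m+R_m,\qquad D_m:=A_mh-U^{-1}QA_mh,\quad C_m:=(U^{-1}-I)QA_mh,\quad R_m:=\frac1m\sum_{i=1}^mQ^ih.
\]

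I would first show that for each fixed $m$ the process $\xi_n^{(m)}:=n^{-1/2}W(n,D_m+C_m)$ is tight in $\mathcal H_\alpha^o[0,1]$. Since $P_TU^{-1}=\mathrm{Id}$ we have $QD_m=0$ and $D_m\in H\subset\mathbb L^p$, hence by item~\ref{item:PT_martingale} of Definition~\ref{dfn:definition_condition_C} the sequence $(D_m\circ T^i)_{i\geqslant0}$ is a stationary martingale difference sequence in $\mathbb L^p$, so $n^{-1/2}W(n,D_m)$ converges in distribution in $\mathcal H_\alpha^o[0,1]$ (in particular is tight) by the H\"olderian invariance principle for $\mathbb L^p$ martingale differences, Theorem~2.2 of \cite{MR3426520}. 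For the coboundary part, $S_j(C_m)=(QA_mh)\circ T^{-1}-(QA_mh)\circ T^{j-1}$ telescopes, whence $M(n,C_m,T)\leqslant2\max_{-1\leqslant k\leqslant n-1}\abs{(QA_mh)\circ T^k}$ because every increment $j-i\geqslant1$ and $\alpha>0$; by \eqref{eq:link_M_normW} this gives
\[
 \norm{\tfrac1{\sqrt n}W(n,C_m)}_{\mathcal H_\alpha}=n^{1/p-1}M(n,C_m,T)\leqslant \frac{2\max_{-1\leqslant k\leqslant n-1}\abs{(QA_mh)\circ T^k}}{n^{1-1/p}},
\]
which tends to $0$ almost surely, since $QA_mh\in\mathbb L^p\subset\mathbb L^{p/(p-1)}$ yields $\max_{0\leqslant k\leqslant n}\abs{(QA_mh)\circ T^k}=o(n^{1-1/p})$ a.s. Thus $\xi_n^{(m)}$ is tight, and Proposition~\ref{propo:tightness} gives $\lim_{\delta\to0}\sup_{n\geqslant1}\mu\ens{\omega_\alpha(\xi_n^{(m)},\delta)>\eps/2}=0$ for every $\eps>0$ and every $m$.

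Next I would bound the remainder uniformly in $n$: $R_m\in H$, so Corollary~\ref{cor:bound_norm_psp} together with \eqref{eq:comparison_norm_p_infty} gives
\[
 \mu\ens{\norm{\tfrac1{\sqrt n}W(n,R_m)}_{\mathcal H_\alpha}>\tfrac\eps2}\leqslant\left(\frac2\eps\right)^pC^p\norm{R_m}_{\mathrm{MW}(p,P_T)}^p\qquad(n\geqslant1),
\]
and the decisive point is that $\norm{R_m}_{\mathrm{MW}(p,P_T)}\to0$ as $m\to+\infty$. Indeed $\sum_{l=0}^{2^j-1}Q^lR_m=\frac1m\sum_{i=1}^mQ^i\left(\sum_{l=0}^{2^j-1}Q^lh\right)$, and $\norm h_{\mathrm{MW}(p,P_T)}=\sum_{j\geqslant0}2^{-j/2}\norm{\sum_{l=0}^{2^j-1}Q^lh}_p<+\infty$ forces $2^{-j}\norm{\sum_{l=0}^{2^j-1}Q^lh}_p\to0$; since $2^{-j}\sum_{l=0}^{2^j-1}Q^lh\to Ph$ in $\mathbb L^p$, where $P$ is the mean ergodic projection of $Q$ (which is power bounded on the reflexive $\mathbb L^p$-closure of $H$) and $QP=PQ=P$, we deduce $Ph=0$, hence $P\left(\sum_{l=0}^{2^j-1}Q^lh\right)=2^jPh=0$ for every $j$. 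Therefore $\norm{\sum_{l=0}^{2^j-1}Q^lR_m}_p\to0$ as $m\to+\infty$ for each $j$, while $2^{-j/2}\norm{\sum_{l=0}^{2^j-1}Q^lR_m}_p\leqslant K(P_T)\,2^{-j/2}\norm{\sum_{l=0}^{2^j-1}Q^lh}_p$ is summable in $j$, so dominated convergence gives $\norm{R_m}_{\mathrm{MW}(p,P_T)}\to0$. Combining everything through $\omega_\alpha(\xi_n,\delta)\leqslant\omega_\alpha(\xi_n^{(m)},\delta)+\norm{n^{-1/2}W(n,R_m)}_{\mathcal H_\alpha}$, we obtain for every $m$ that $\lim_{\delta\to0}\limsup_{n\to+\infty}\mu\ens{\omega_\alpha(\xi_n,\delta)>\eps}\leqslant(2/\eps)^pC^p\norm{R_m}_{\mathrm{MW}(p,P_T)}^p$, and letting $m\to+\infty$ verifies \eqref{eq:tightness_criterion}, which concludes the proof.

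The step I expect to be the genuine obstacle is the convergence $\norm{R_m}_{\mathrm{MW}(p,P_T)}\to0$: it rests on observing that the very hypothesis $\norm h_{\mathrm{MW}(p,P_T)}<+\infty$ annihilates the ergodic projection of $h$, together with a $j$-uniform summable domination permitting a term-by-term passage to the limit in the series defining the $\mathrm{MW}$-norm. The remaining ingredients — the martingale approximation at scale $m$, the negligibility of the coboundary in the H\"older topology, and the conversion of the maximal inequality into a tail estimate — are routine given Corollary~\ref{cor:bound_norm_psp} and the martingale case treated in \cite{MR3426520}.
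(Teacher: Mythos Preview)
Your proof is correct and follows the same overall strategy as the paper: approximate $h$ in the $\mathrm{MW}(p,P_T)$-norm by a function admitting a martingale--coboundary decomposition in $\mathbb L^p$, control the remainder via Corollary~\ref{cor:bound_norm_psp}, and conclude tightness from the martingale--coboundary case. The difference lies in how the approximation is produced. The paper shows that $P_T$ is mean-ergodic on the Banach space $(\mathrm{MW}(p,P_T),\norm{\cdot}_{\mathrm{MW}(p,P_T)})$, observes that $P_T$ has no nontrivial fixed point there, and then invokes Krengel's abstract density theorem to conclude that $(I-P_T)\mathrm{MW}(p,P_T)$ is dense; it then cites Corollary~2.5 of \cite{MR3426520} for the martingale--coboundary part in one stroke. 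You instead construct the approximating element explicitly as the Ces\`aro-type average $A_mh$, prove $\norm{R_m}_{\mathrm{MW}(p,P_T)}\to0$ by a term-by-term dominated convergence argument (using the mean ergodic theorem in $\mathbb L^p$ rather than in $\mathrm{MW}$ to kill the ergodic projection of $h$), and treat the martingale and coboundary pieces separately. Your route is a bit more elementary and self-contained (no appeal to \cite{MR797411}), at the cost of redoing by hand what Krengel's theorem packages abstractly; the paper's route is shorter once one is willing to quote the ergodic-theoretic machinery. One cosmetic remark: in the coboundary step, the inclusion $\mathbb L^p\subset\mathbb L^{p/(p-1)}$ is not what yields $\max_{k\leqslant n}\abs{g\circ T^k}=o(n^{1-1/p})$ a.s.\ (that exponent is borderline); it is simply $g\in\mathbb L^p$ together with Borel--Cantelli, since $\sum_n\mu\{\abs g>\varepsilon n^{1-1/p}\}\leqslant C\sum_n n^{-(p-1)}<\infty$ for $p>2$.
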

\begin{proof}
 Let us define $V_n:=\sum_{i=0}^{n-1}P^i$. Using 
 $\norm{V_nV_k}_p\leqslant K(P)\min\ens{k\norm{V_n}_p,
 n\norm{V_k}_p}$, we derive that for each $f\in\mathrm{MW}(p,P)$,
 \begin{equation}
  \frac{\norm{V_{2^n}f}_{\mathrm{MW}(p,P)}}{2^n}\leqslant 
  K(P)\left(\frac{\norm{V_{2^nf}}_p}{2^{n/2}}+
  \sum_{k\geqslant n+1}\frac{\norm{V^{2^k}f}_p}{2^{k/2}}\right)
 \end{equation}
 which goes to $0$ as $n$ goes to infinity. If $m\geqslant 1$ is an 
 integer and if $n$ is such that $2^n\leqslant m<2^{n+1}$, then 
 \begin{equation}
  \frac{\norm{V_mf}_{\mathrm{MW}(p,P)}}m\leqslant 
  \frac{K(P)}m\sum_{k=0}^n\norm{V_{2^k}f}_{\mathrm{MW}(p,P)}
  \leqslant \frac{K(P)}m\sum_{k=0}^n2^k\varepsilon_k,
 \end{equation}
 where $(\varepsilon_k)_{k\geqslant 1}$ is a sequence converging to $0$. 
 This entails that the operator $P$ is mean-ergodic on $\mathrm{MW}(p,P)$. 
 Furthermore, since $P$ has no non trivial fixed points on the 
 Banach space $\left(\mathrm{MW}(p,P),\norm{\cdot}_{\mathrm{MW}(p,P)}\right)$, we derive by 
 Theorem~1.3 p.73 of \cite{MR797411} that the subspace 
 $(I-P)\mathrm{MW}(p,P)$ is dense in $\mathrm{MW}(p,P)$ for the 
 topology induced by the norm $\norm{\cdot}_{\mathrm{MW}(p,P)}$.
 
 Let $h\in H$ be such that $\norm{h}_{\mathrm{MW}(p,P)}<+\infty$ and 
 $x>0$. We can find $f\in (I-P)\mathrm{MW}(p,P)$ such that 
 $\norm{h-f}_{\mathrm{MW}(p,P)}<x$. Consequently, using 
 Corollary~\ref{cor:bound_norm_psp}, we derive that for each positive $\varepsilon$ 
 and $\delta$, 
 \begin{equation}\label{eq:proof_tightness_under_condition_C_step_1}
  \mu\ens{\omega_{1/2-1/p}\left(\frac 1{\sqrt n}W(n,h),\delta\right)>2\varepsilon}
  \leqslant \varepsilon^{-p}x+
  \mu\ens{\omega_{1/2-1/p}\left(\frac 1{\sqrt n}W(n,f),\delta\right)>\varepsilon}.
  \end{equation}
 Now, since the function $f$ belongs to $(I-P)\mathrm{MW}(p,P)$, we can find 
 $f'\in \mathrm{MW}(p,P)$ such that $f=f'-Pf'$. If $PU^{-1}=\mathrm{Id}$, then 
 we write $f=f'-U^{-1}Pf'+(U^{-1}-I)f'$ and if $PU=\mathrm{Id}$, then 
 $f=f'-UPf'+(U-I)f'$. In other words, $f$ admits a 
 martingale-coboundary decomposition in $\mathbb L^p$ (since $f'$ belongs to $\mathbb L^p)$.
 Consequently, by Corollary~2.5 of \cite{MR3426520}, the 
 sequence $(n^{-1/2}W(n,f))_{n\geqslant 1}$ is tight in $\mathcal H_{1/2-1/p}$. By 
 Proposition~\ref{propo:tightness} and \eqref{eq:proof_tightness_under_condition_C_step_1}, 
 we derive that for each positive $\varepsilon$ and $x$, 
 \begin{equation}
 \lim_{\delta\to 0}
 \limsup_{n\to +\infty}\mu\ens{\omega_{1/2-1/p}\left(\frac 1{\sqrt n}W(n,h),\delta\right)>2\varepsilon}
  \leqslant \varepsilon^{-p}x.
 \end{equation}
 Since $x$ is arbitrary we conclude the proof of \eqref{propo:tightness_under_condition_C}
 by using again Proposition~\ref{propo:tightness}.
\end{proof}

\begin{proof}[Proof of Theorem~\ref{thm:main_result}]
 Writing $f=\mathbb E\left[f\mid \mathcal M\right]+f-\mathbb E\left[f\mid \mathcal M\right]$, 
 the proof reduces (as mentioned in the begining of the section) to establish 
 tightness in $\mathcal H_{1/2-1/p}^o[0,1]$ of the sequences 
 $\left(W_n\right)_{n\geqslant 1}:= 
 \left(n^{-1/2}W\left(n,\mathbb E\left[f\mid \mathcal M\right]\right)\right)_{n\geqslant 1}$ and 
 $\left(W'_n\right)_{n\geqslant 1}:=
 \left(n^{-1/2}W\left(n,f-\mathbb E\left[f\mid \mathcal M\right]\right)\right)_{n\geqslant 1}$.
 
 \begin{itemize}
  \item Tightness of $\left(W_n\right)_{n\geqslant 1}$. 
  We define 
  \begin{equation}
   P(f):=\mathbb E\left[Uf\mid\mathcal M\right]
  \mbox{ and }H:=\ens{f\in\mathbb L^p,f\mbox{ is }\mathcal M\mbox{-measurable}}.
  \end{equation}

   Then 
  $\left(H,P\right)$ 
  satisfies condition $(C)$. Since 
  \begin{equation}
   \sum_{i=0}^{n-1}P^i\left(\mathbb E\left[f\mid \mathcal M\right]\right)=
  \mathbb E\left[S_n(f)\mid \mathcal M\right],
  \end{equation}

   the convergence of 
  the first series in \eqref{MWp} is equivalent to $f\in 
  \mathrm{MW}(p,P)$ (by Lemma~2.7 of \cite{MR2123210}). By 
  Proposition~\ref{propo:tightness_under_condition_C},
  we derive that the sequence $\left(W_n\right)_{n\geqslant 1}
  $ is tight in $\mathcal H_{1/2-1/p}^o[0,1]$.
  
  \item Tightness of $\left(W'_n\right)_{n\geqslant 1}$. 
  We define 
  \begin{equation}
   P(f):=U^{-1}f-\mathbb E\left[U^{-1}f\mid\mathcal M\right]
  \mbox{ and }H:=\ens{f\in\mathbb L^p,\mathbb E\left[f\mid \mathcal M\right]=0}.
  \end{equation}

   Since for each $f\in H$ and each $k\geqslant 1$, 
  $\norm{P^kf}_p\leqslant 2\norm{f}_p$, $\left(H,P\right)$
  satisfies condition $(C)$ (see the proof of Proposition~2 in 
  \cite{MR2344817} for the other conditions). Since $P\left(\mathbb E\left[f\mid \mathcal M\right]
  \right)=0$, we have 
  \begin{equation}
   \sum_{i=1}^{n}P^i\left(f-\mathbb E\left[f\mid \mathcal M\right]\right)=
   \sum_{i=1}^{n}P^if=
  U^{-n}\left(S_n(f)-\mathbb E\left[S_n(f)\mid T^{-n}\mathcal M\right]\right),
  \end{equation}
  hence 
 the convergence of 
  the second series in \eqref{MWp} implies that $f$ belongs to  
  $\mathrm{MW}(p,P)$ (by Lemma~37 of \cite{MR3077530}). By 
  Proposition~\ref{propo:tightness_under_condition_C},
  we derive that the sequence $\left(W'_n\right)_{n\geqslant 1}$ 
  is tight in $\mathcal H_{1/2-1/p}^o[0,1]$.
 \end{itemize}
This ends the proof of Theorem~\ref{thm:main_result}.
\end{proof}

\subsection{Proof of Theorem~\ref{optimality}} We take a similar construction as in the 
proof of Proposition~1 of \cite{MR2255301}. We consider a non-negative 
sequence $(a_n)_{n\geqslant 1}$, and a sequence $(u_k)_{k\geqslant 1}$ of 
real numbers such that 
\begin{equation}\label{def_u_j}
 u_1=1,u_2=2, u_k^{p/2+1}+1<u_{k+1}\mbox{ for }k\geqslant 3\mbox{ and }
 a_t\leqslant k^{-2} \mbox{ for }t\geqslant u_k.  
\end{equation}
Notice that since $p>2$, the conditions \eqref{def_u_j} are more 
restrictive than that of the 
proof of Proposition~1 of \cite{MR2255301}.
 If $i=u_j$ for some $j\geqslant 1$, then we define 
 $p_i:=cj/u_j^{1+p/2}$ and $p_i=0$ otherwise. Let $(Y_k)_{k\geqslant 0}$ be a discrete time 
 Markov chain with the state space $\Z^+$ and transition matrix 
 given by $p_{k,k-1}=1$ for $k\geqslant 1$ and $p_{0,j-1}:=p_j$, $j\geqslant 1$. 
 We shall also consider a random variable $\tau$ which takes its values among 
 non-negative integers, and whose distribution is given by $\mathbb \mu(\tau=j)
 =p_j$. Then the stationary distribution 
 exists and is given by 
 \begin{equation}
  \pi_j=\pi_0\sum_{i=j+1}^\infty p_i, j\geqslant 1, \mbox{ where }
 \pi_0=1/\mathbb E[\tau].
 \end{equation}
 We start from the stationary distribution $(\pi_j)_{j\geqslant 0}$ and we 
 take $g(x):=\mathbf 1_{x=0}-\pi_0$, where $\pi_0 =\mu\ens{Y_0=0}$. 
 We then define $f\circ T^j=X_j:=g(Y_j)$. 
 
 It is already checked in \cite{MR2255301}
 that the sequence $(X_j)_{j\geqslant 0}$ satisfies \eqref{MW_altered}, 
 where $\mathcal M=\sigma(X_k,k\leqslant j)$ and 
 $S_n=\sum_{j=1}^nX_j$. To conclude the proof, it remains to check that the 
 sequence $\left(n^{-1/2}  W(n,f,T)\right)_{n\geqslant 1} $ is not tight in 
 $\mathcal H_{1/2-1/p}^o$, which will be done by 
 disproving \eqref{eq:tightness_criterion} for a particular choice of 
 $\varepsilon$. To this aim, we define 
 \begin{equation}\label{def_t_and_tau} 
  T_0=0, T_k=\min\ens{t>T_{k-1}\mid Y_t=0 },\quad \tau_k=T_k-T_{k-1}, k\geqslant 1.   
 \end{equation}
 Then $(\tau_k)_{k\geqslant 1}$ is an independent sequence and each $\tau_k$ is
 distributed as $\tau$ and
 \begin{equation}\label{sum_of_Tk} 
  S_{T_k}=\sum_{j=1}^k(1-\pi_0\tau_j)=k-\pi_0T_k.  
 \end{equation}
 
 Let us fix some integer $K$ greater than $\mathbb E[\tau]$. 
 Let $\delta\in (0,1)$ be fixed and $n$ an integer such that $1/n<\delta$. Then 
 the inequality 
 \begin{multline}
  \frac 1{(nK)^{1/p} }\quad \max_{\mathclap{\substack{0\leqslant i<j\leqslant nK\\ 
  j-i\leqslant n\delta} }  }
  \quad\frac{\abs{S_j-S_i}}{(j-i)^{1/2-1/p} }  \geqslant 
   \frac 1{(nK)^{1/p} }\mathbf 1\ens{T_n\leqslant Kn}\times\\ 
   \times\max_{1\leqslant k 
  \leqslant n}\frac{\abs{S_{T_k}-S_{T_{k-1} }  } }{(T_k-T_{k-1})^{1/2-1/p} }
  \mathbf 1\ens{\abs{T_k-T_{k-1} }\leqslant n\delta }  
 \end{multline}
 takes place. By \eqref{def_t_and_tau} and \eqref{sum_of_Tk}, this can 
 be rewritten as 
 \begin{multline}
  \frac 1{(nK)^{1/p} }\quad \max_{\mathclap{\substack{0\leqslant i<j\leqslant nK\\ 
  j-i\leqslant n\delta} }  }
  \quad\frac{\abs{S_j-S_i}}{(j-i)^{1/2-1/p} }  \geqslant 
   \frac 1{(nK)^{1/p} }\mathbf 1\ens{T_n\leqslant Kn}\times\\ 
   \times\max_{1\leqslant k 
  \leqslant n}\frac{\abs{1-\pi_0 \tau_k  } }{\tau_k^{1/2-1/p} }
  \mathbf 1\ens{\tau_k\leqslant n\delta } .
 \end{multline}
 Defining for a fixed $C$ the event 
 \begin{equation}
  A_n(C):=\ens{\frac{\abs{1-\pi_0 \tau  } }{\tau^{1/2-1/p} }
  \geqslant  C(Kn)^{1/p}  }  \cap \ens { \tau \leqslant n \delta},
 \end{equation}
 we obtain by independence of $\left(\tau_k\right)_{k\geqslant 1}$ 
 \begin{equation}
  \mu\ens{ \frac 1{(nK)^{1/p} }\quad \max_{\mathclap{\substack{0\leqslant i<j\leqslant nK\\ 
  j-i\leqslant n\delta} }  }
  \quad\frac{\abs{S_j-S_i}}{(j-i)^{1/2-1/p} }\geqslant C}\geqslant 
   1- \left(1- \mu(A_n(C)) \right)^n 
  - \mu\ens{T_n> Kn}.
 \end{equation}
 By the law of large numbers, we obtain, accounting $K > \mathbb E[\tau]$, that 
 \begin{equation}\label{eq:weakened_MW_counter_example_step}
  \limsup_{n \to \infty }\mu\ens{ \frac 1{(nK)^{1/p} }\quad 
   \max_{\mathclap{\substack{0\leqslant i<j\leqslant nK\\ 
  j-i\leqslant n\delta} }  }
  \quad\frac{\abs{S_j-S_i}}{(j-i)^{1/2-1/p} }\geqslant C}
   \geqslant\limsup_{n \to \infty }1- \left(1- \mu(A_n(C)) \right)^n.
 \end{equation}
We choose $C:=\pi_0/(2K^{1/p})$. 
Considering the integers $n$ of the form $\left[u_j^{(p+2)/2}\right]$, we obtain in view of 
\eqref{eq:weakened_MW_counter_example_step}~:
\begin{multline}\label{eq:weakened_MW_counter_example_step2}
 \limsup_{n\to \infty} \mu\ens{ \frac 1{(nK)^{1/p} }\quad 
 \max_{\mathclap{\substack{0\leqslant i<j\leqslant nK\\ 
  j-i\leqslant n\delta} }  }
  \quad\frac{\abs{S_j-S_i}}{(j-i)^{1/2-1/p} }\geqslant \frac{\pi_0}{2K^{1/p}}}\geqslant \\
 \geqslant \limsup_{j\to \infty} 
 1- \left(1- \mu\left(A_{\left[u_j^{(p+2)/2}\right]}\left(\frac{\pi_0}{2K^{1/p}}\right)
 \right) \right)^{\left[u_j^{(p+2)/2}\right]}.
 \end{multline}
 Since $\tau\geqslant 1$ almost surely, 
the following inclusions take place for $n>(2/\pi_0)^p$:
\begin{align*}
 A_n(\pi_0/(2K^{1/p}))&\supset \ens{\pi_0\tau^{1/2+1/p}-\tau^{-1/2+1/p}
 \geqslant \pi_0/(2K^{1/p})(Kn)^{1/p}}\cap \ens { \tau \leqslant n \delta}\\
 &\supset \ens{\tau^{1/2+1/p}
 \geqslant \frac{1+\pi_0n^{1/p}/2}{\pi_0}}\cap \ens { \tau \leqslant n \delta}\\
 &\supset \ens{\tau^{1/2+1/p}
 \geqslant n^{1/p}}\cap \ens { \tau \leqslant n \delta}\\
 &=\ens{n^{2/(p+2)}\leqslant \tau \leqslant n \delta}.
\end{align*}
Consequently, for $j$ large enough,  
\begin{equation}
 \mu\left(A_{\left[u_j^{(p+2)/2}\right]}\left(\frac{\pi_0}{2K^{1/p}}\right)\right)
 \geqslant \mu\ens{\left[u_j^{(p+2)/2}\right]^{2/(p+2)}\leqslant \tau \leqslant 
 \left[u_j^{(p+2)/2}\right] \delta}.
\end{equation}
Since $\tau$ takes only integer values among $u_l$'s and $\left[u_j^{(p+2)/2}\right] \delta
<u_{j+1}$ (by \eqref{def_u_j} and the fact that $\delta<1$), we obtain in view 
of \eqref{eq:weakened_MW_counter_example_step2}, that 
\begin{multline}\label{eq:weakened_MW_counter_example_step3}
 \limsup_{n\to \infty} \mu\ens{ \frac 1{(nK)^{1/p} }\quad 
 \max_{\mathclap{\substack{0\leqslant i<j\leqslant nK\\ 
  j-i\leqslant n\delta} }  }
  \quad\frac{\abs{S_j-S_i}}{(j-i)^{1/2-1/p} }\geqslant \frac{\pi_0}{2K^{1/p}}}\geqslant \\
 \geqslant \limsup_{j\to \infty}1- \left(1-\mu\ens{\tau=u_j}\right)^{\left[u_j^{(p+2)/2}\right] }\\
 =1-\liminf_{j\to \infty}\left(1-cju_j^{-1-p/2}\right)^{\left[u_j^{(p+2)/2}\right] }.
\end{multline}
Noticing that for a fixed $J$, 
\begin{equation}
 \liminf_{j\to \infty}\left(1-cju_j^{-1-p/2}\right)^{\left[u_j^{(p+2)/2}\right] }
 \leqslant \limsup_{j\to \infty}
 \left(1-cJu_j^{-1-p/2}\right)^{\left[u_j^{(p+2)/2}\right] }=e^{-cJ},
\end{equation}
we deduce that the 
last term of \eqref{eq:weakened_MW_counter_example_step3} is equal to $1$. Since 
\begin{equation}
  \frac 1{(nK)^{1/p} }\quad 
 \max_{\mathclap{\substack{0\leqslant i<j\leqslant nK\\ 
  j-i\leqslant n\delta} }  }
  \quad\frac{\abs{S_j-S_i}}{(j-i)^{1/2-1/p}}\leqslant \omega_{1/2-1/p}
  \left(\frac 1{\sqrt{nK}}W(nK,f),\delta\right),
\end{equation}
we derive that \eqref{eq:tightness_criterion} does not hold with $\varepsilon=
\pi_0/(2K^{1/p})$.
This finishes the proof of Theorem~\ref{optimality}.

\bigskip 

\textbf{Acknowledgements} The author would like to thank an anonymous referee 
for many valuable comments which improved the presentation of the paper and 
led to a shorter proof of Theorem~\ref{thm:main_result}.

\def\polhk\#1{\setbox0=\hbox{\#1}{{\o}oalign{\hidewidth
  \lower1.5ex\hbox{`}\hidewidth\crcr\unhbox0}}}\def\cprime{$'$}
\providecommand{\bysame}{\leavevmode\hbox to3em{\hrulefill}\thinspace}
\providecommand{\MR}{\relax\ifhmode\unskip\space\fi MR }
\providecommand{\MRhref}[2]{%
  \href{http://www.ams.org/mathscinet-getitem?mr=#1}{#2}
}
\providecommand{\href}[2]{#2}

\end{document}